\newcommand{\Acal}{\mathcal{A}}
\newcommand{\Bcal}{\mathcal{B}}
\newcommand{\Pcal}{\mathcal{P}}
\newcommand{\N}{\mathbb{N}}
\newcommand{\Z}{\mathbb{Z}}
\newcommand{\E}{\mathbb{E}}
\newcommand{\al}{\alpha}
\newcommand{\ga}{\gamma}
\newcommand{\del}{\delta}
\newcommand{\ep}{\epsilon}
\newcommand{\sig}{\sigma}
\newcommand{\om}{\omega}
\newcommand{\Om}{\Omega}
\newcommand{\id}{{\rm{id}}}
\newtheorem{theorem}{Theorem}[section]
\newtheorem{lemma}[theorem]{Lemma}
\newtheorem{prop}[theorem]{Proposition}
\theoremstyle{definition}
\newtheorem{definition}[theorem]{Definition}
\theoremstyle{remark}
\numberwithin{equation}{section}
\begin{document}


\title
{Recurrence for stationary group actions}

\author{Hillel Furstenberg }

\address{Department of Mathematics\\
     Hebrew University of Jerusalem\\
         Jerusalem\\
         Israel}
\email{harry@math.huji.ac.il}

\author{Eli Glasner }

\address{Department of Mathematics\\
     Tel Aviv University\\
         Ramat Aviv\\
         Israel}
\email{glasner@math.tau.ac.il}

\date {September 21, 2011}

\keywords{Stationary dynamical systems,  Szemer\'edi theorem,
SAT, multiple recurrence}

\subjclass[2000]{Primary 22D05, 37A30, 37A50. Secondary 22D40, 37A40}

\dedicatory{In memory of Leon Ehrenpreis.}

\begin{abstract}
Using a structure theorem from \cite{FG} we prove a version of multiple recurrence for sets of positive measure in a general stationary dynamical
system.
\end{abstract}

\maketitle


\tableofcontents \setcounter{secnumdepth}{1}



\setcounter{section}{0}
\setcounter{page}{1}


\section*{Introduction}

The celebrated theorem of E. Szemer\'{e}di regarding the existence of
long arithmetical progressions in subsets of the integers having positive (upper) density is known to be equivalent to a statement involving ``multiple recurrence''
in the framework of dynamical systems theory (see e.g. \cite{Fur5}). Stated precisely this is the following assertion:

\begin{theorem}
Let $(X,\Bcal, \mu,T)$ be a measure preserving dynamical system; i.e.,
$(X,\Bcal, \mu)$ is a probability space and $T : X \to X$ a measure preserving
mapping of $X$ to itself. If $A \in \Bcal$ is a measurable subset with
$\mu(A) >0$, then for any $k=1,2,3,\dots$ there exists $m \in \N=\{1,2,3,\dots\}$ with
$$
\mu(A \cap T^{-m}A \cap T^{-2m}A \cap \cdots \cap T^{-km}) > 0.
$$
\end{theorem}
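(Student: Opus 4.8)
This is the classical multiple recurrence theorem of Furstenberg, and the plan is to reproduce the ergodic-theoretic proof via the structure theory of measure preserving systems. The key is to work with the stronger \emph{SZ property}: call $(X,\Bcal,\mu,T)$ an SZ system if for every $A\in\Bcal$ with $\mu(A)>0$ and every $k\in\N$ one has
\[
\liminf_{N\to\infty}\frac1N\sum_{m=1}^{N}\mu\bigl(A\cap T^{-m}A\cap T^{-2m}A\cap\cdots\cap T^{-km}A\bigr)>0 .
\]
The uniform lower bound is what allows the property to be transferred along limits. The first reduction is to the ergodic case: writing the ergodic decomposition $\mu=\int \mu_\om\,d\om$, if $\mu(A)>0$ then $\mu_\om(A)>0$ for $\om$ in a set of positive measure, so it is enough to know that each ergodic component is SZ.

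The heart of the argument is that the class of ergodic SZ systems is closed under the building blocks furnished by the Furstenberg structure theorem \cite{Fur5}. Concretely I would prove: \textbf{(i)} the trivial one-point system is SZ; \textbf{(ii)} if $X$ is the inverse limit of an increasing sequence of SZ factors, then $X$ is SZ --- approximate $A$ in $L^1(\mu)$ by a set measurable with respect to one of the factors and use the uniform liminf bound to keep the multiple correlation positive in the limit; \textbf{(iii)} if $\pi\colon X\to Y$ is a \emph{compact (isometric) extension} and $Y$ is SZ, then $X$ is SZ; \textbf{(iv)} if $\pi\colon X\to Y$ is a \emph{relatively weakly mixing extension} and $Y$ is SZ, then $X$ is SZ. Granting (i)--(iv), the structure theorem writes an arbitrary ergodic system $X$ as the top of a transfinite tower $\{\mathrm{pt}\}=\Ycal_0\subset\Ycal_1\subset\cdots\subset\Ycal_\eta$ in which each $\Ycal_{\al+1}$ is a maximal isometric extension of $\Ycal_\al$, limit stages are inverse limits, and the final extension $\Ycal_\eta\to X$ is relatively weakly mixing; transfinite induction then gives that $X$ is SZ, and specializing to the original $A$ (with $N$ large) yields the theorem.

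For step (iv) I would run a van der Corput / Hilbert space estimate: relative weak mixing says the fibered powers $X\times_Y\cdots\times_Y X$ carry no extra invariant structure over $Y$, which forces the multiple averages of $\ch_A\cdot T^m\ch_A\cdots T^{km}\ch_A$ to coincide asymptotically in $L^2$ with those built from $\E(\ch_A\mid Y)$ over $Y$, so that SZ on $Y$ suffices. The genuinely difficult step is (iii), the compact extensions case: here one must descend to the conditional measures $\mu_y$, exploit the precompactness in the fibered $L^2$ of the orbit $\{T^m\ch_A : m\ge 1\}$ over almost every fiber, and combine this with a van der Waerden type colouring argument to synchronize fibers with the recurrence produced by SZ applied on the base to $\{y:\mu_y(A)>c\}$; this produces a set of $m$ of positive lower density along which the full $(k+1)$-fold intersection has positive measure. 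Marrying the combinatorics on the base to the compactness in the fibers is the main obstacle, and it is precisely what a structure theorem of the kind used throughout this paper is designed to handle.
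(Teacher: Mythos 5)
This theorem is not proved in the paper at all: it is quoted in the introduction as known background, with the proof deferred to \cite{Fur5}, so there is no in-paper argument to compare against. Your proposal is a faithful outline of precisely that classical proof --- ergodic decomposition, the SZ property with its uniform liminf formulation, closure under inverse limits, compact extensions, and relatively weakly mixing extensions, assembled by transfinite induction along the Furstenberg--Zimmer tower --- and the architecture you describe is correct (including the correct reason the liminf-average form is needed to pass through ergodic decomposition and limits). Be aware, though, that as written this is a roadmap rather than a proof: steps (iii) and (iv), which you rightly flag as the substance, are exactly the content of \cite{Fur5} and are only described, not carried out; also, the theorem as stated does not assume $T$ invertible, so one should first pass to the natural extension before invoking the structure theorem.
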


The case $k=1$ is the ``Poincar\'{e} recurrence theorem'' and is an easy
exercise in measure theory. The general case is more recondite
(see e.g. \cite{Fur5}). In principle recurrence phenomena make sense in the
framework of more general group actions and we can inquire what is the
largest domain of their validity. Specifically if a group $G$ acts on a measure
space $(X,\nu)$ (we have suppressed the $\sigma$-algebra of measurable sets)
with $(g,x) \to T_gx$ by non-singular maps $\{T_g\}$, and $A$ is a measurable subset of  $X$ with $\nu(A) >0$, under what conditions can we find for large
$k$ an element $g \in G$, \ $g \not=$ identity, with
$$
\nu(A \cap T_g^{-1}A \cap T_g^{-2}A \cap \cdots \cap T_g^{-k}A) > 0\ ?
$$
Some conditions along the line of measure preservation will be necessary.
Without this we could take $G =\Z, \ X = \Z\cup \{\infty\},\
\ \forall\ t, n \in \Z,\ T_t n = n +t,\
n \not = \infty, \ T_t \infty = \infty$, and
$\nu(\{n\})= \frac{1}{3\cdot 2^{|n|}}$,\  $\nu(\{\infty\})=0$. Here no $t \not= 0$
will satisfy $T_t(\{n\}) \cap \{n\} \not= 0$.

The present work extends an earlier paper on ``stationary'' systems
(\cite{FG}). Here we shall show that quite generally, under the hypothesis
of ``stationarity'', which we shall presently define, one obtains a version of
multiple recurrence for sets of positive measure.

We recall the basic definitions here, although we will rely on the
treatment in \cite{FG} for fundamental results. Throughout $G$
will represent a locally compact, second countable group, and
$\mu$ a fixed probability measure on Borel sets of $G$. We
consider measure spaces $(X,\nu)$ on which $G$ acts measurably,
i.e. the map $G \times X \to X$ which we denote $(g,x) \to gx$ is
measurable, and so the convolution of the measure $\mu$ on $G$ and
$\nu \in \Pcal(X),\ \mu*\nu$, is defined as the image of $\mu
\times \nu$ on $X$ under this map; thus $\mu*\nu$ is again a
probability measure on $X$, an element of $\Pcal(X)$. { \bf We
will always assume that $G$ acts on $(X,\nu)$ by non-singular}
transformations; i.e., $\nu(A) =0$ implies $\nu(gA)=0$ for a
measurable $A \subset X$ and $g \in G$.

\begin{definition}
When $\mu*\nu =\nu$ we say that $(X,\nu)$ is a stationary $(G,\mu)$
space.
\end{definition}

This can be interpreted as saying that $\nu$ is invariant ``on the average''.
It is also equivalent to the statement that for measurable $A \subset X$
\begin{equation}\label{av}
\nu(A) = \int_G \nu(g^{-1}A)\, d\mu(g).
\end{equation}

Associated with the space $(G,\mu)$ we will consider the probability space
$$
(\Omega,P) = (G,\mu) \times (G,\mu) \times \cdots
$$
where we will denote the random variables representing the
coordinates of a point $\om \in \Om$ by
$\{\xi_1(\om), \xi_2(\om),\dots,\xi_n(\om),\dots \}$.
We will draw heavily on the ``martingale convergence theorem''
which for our purposes can be formulated:

\begin{theorem}[Martingale convergence theorem (MCT)]
Let $\{F_n(\om)\}_{n \in \N}$ be a sequence of uniformly bounded, measurable,
real valued functions on $\Om$ 
with $F_n$ measurable with respect to
$\xi_1,\xi_2,\dots,\xi_n$ and such that
\begin{equation}
F_n(\xi_1,\xi_2,\dots,\xi_n) =
\int_G F_{n+1} (\xi_1,\xi_2,\dots,\xi_n,\eta)\, d\mu(\eta).
\end{equation}
(Such a sequence is called a {\em martingale}.)
Then with probability one, the sequence $\{F_n(\om)\}$
converges almost surely to a limit $F(\om)$ satisfying:
\begin{equation}\label{limit}
\E(F) = \int F(\om) \, dP(\om) = \int_G F_1(\eta)\, d\mu(\eta).
\end{equation}
\end{theorem}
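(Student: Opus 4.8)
The plan is to recognize the hypothesis as the defining property of a martingale for the natural filtration $\Fcal_n=\sigma(\xi_1,\dots,\xi_n)$ on $\Om=(G,\mu)\times(G,\mu)\times\cdots$, and then to run Doob's classical almost-sure convergence argument. First I would observe that, since the coordinate maps $\xi_1,\xi_2,\dots$ are independent and each has law $\mu$, taking conditional expectation with respect to $\Fcal_n$ amounts to integrating out the last free coordinate: $\E(F_{n+1}\mid\Fcal_n)(\om)=\int_G F_{n+1}(\xi_1(\om),\dots,\xi_n(\om),\eta)\,d\mu(\eta)$. The hypothesis says exactly that this equals $F_n$, so $(F_n,\Fcal_n)$ is a (bounded) martingale; in particular $\E(F_n)$ is constant in $n$ and equals $\E(F_1)=\int_G F_1(\eta)\,d\mu(\eta)$, the last equality because $F_1$ is a function of $\xi_1$ alone and $\xi_1$ has law $\mu$.

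The substantive step is the almost-sure convergence. For rationals $a<b$ let $U_N[a,b](\om)$ be the number of upcrossings of $[a,b]$ by the finite sequence $F_1(\om),\dots,F_N(\om)$. Doob's upcrossing inequality --- obtained by writing $U_N$ in terms of a predictable $\{0,1\}$-valued ``betting'' process $H_k$ (switched on when the path first drops to $\le a$, off when it next rises to $\ge b$) and using that the martingale transform $\sum_{k\le N}H_k(F_k-F_{k-1})$ is again a zero-mean martingale --- gives $(b-a)\,\E(U_N[a,b])\le\E\!\big((F_N-a)^-\big)$. Since the $F_n$ are uniformly bounded, say $|F_n|\le M$, the right-hand side is $\le M+|a|$ for every $N$, so by monotone convergence $U_\infty[a,b]:=\lim_N U_N[a,b]<\infty$ almost surely. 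Intersecting over the countably many rational pairs $a<b$ shows that almost surely there is no pair with $\liminf_n F_n<a<b<\limsup_n F_n$; hence $\liminf_n F_n=\limsup_n F_n$ a.s., i.e. $F_n(\om)$ converges a.s. to some $F(\om)$ with $|F|\le M$.

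Finally, because $|F_n|\le M$ and $F_n\to F$ almost surely, bounded convergence gives $\E(F)=\lim_n\E(F_n)=\E(F_1)=\int_G F_1(\eta)\,d\mu(\eta)$, which is \eqref{limit}. The one genuinely non-routine ingredient is the upcrossing inequality, and inside it the verification that the transformed process keeps the right (sub)martingale sign; the rest is manipulation of conditional expectations, independence of the coordinates, and a countable intersection. (If one prefers to avoid upcrossings: uniform boundedness makes $\{F_n\}$ bounded in $L^2(\Om,P)$, the martingale increments are then mutually orthogonal with $\sum_n\|F_{n+1}-F_n\|_2^2<\infty$, and combining this with Doob's $L^2$ maximal inequality applied to the tails $F_m-F_n$ forces $\{F_n\}$ to be a.s. Cauchy, hence a.s. convergent.)
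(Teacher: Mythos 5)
Your proof is correct. Note, however, that the paper offers no proof of this statement at all: the MCT is quoted as a classical background tool (with the reader implicitly referred to standard probability texts), so there is no argument in the paper to compare yours against. What you have written is the standard Doob proof --- identifying the hypothesis as the martingale property for the coordinate filtration via independence of the $\xi_i$, the upcrossing inequality with the predictable $\{0,1\}$-valued transform, the countable intersection over rational pairs $a<b$, and bounded convergence for the identity $\E(F)=\E(F_1)=\int_G F_1\,d\mu$ --- and all steps, including the alternative $L^2$-orthogonality route you sketch, are sound. In effect you have supplied the proof the authors take for granted.
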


The theory of stationary actions is intimately related to boundary
theory for topological groups and the theory of harmonic
functions. For details we refer the reader to \cite{F73}.

\section{Poincar\'{e} recurrence for stationary actions}\label{sec:po}

A first application will be a proof of a particular version of the
Poincar\'{e} recurrence phenomenon for stationary actions.

\begin{theorem}\label{thm:Po}
Let $G$ be an infinite discrete group and let $\mu$ be a probability measure
on $G$ whose support $S(\mu)$ generates $G$ as a group. Let $(X,\nu)$ be
a stationary space for $(G,\mu)$ and let $A \subset X$ be a measurable
subset with $\nu(A) >0$. Then there exists $g \in G, \ g \not=$ identity,
with $\nu(A \cap g^{-1}A) > 0$.
\end{theorem}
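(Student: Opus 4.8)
The plan is to argue by contradiction: suppose $\nu(A \cap g^{-1}A) = 0$ for every $g \neq e$. I will use stationarity together with the martingale convergence theorem to extract a contradiction from the fact that $G$ is infinite and $S(\mu)$ generates $G$.

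First I would set up the relevant martingale. Given the measurable set $A \subset X$ with $\nu(A) > 0$, consider for each $n$ the function on $\Omega$ defined by pushing $\nu$ forward under the random product $g_n \cdots g_1$ of the first $n$ coordinates and evaluating a conditional expectation of $\mathbf{1}_A$; more concretely, for a point $x$ distributed according to $\nu$ and independent of $\omega$, the sequence $F_n(\omega) = \nu\bigl((\xi_1 \cdots \xi_n)^{-1} A \mid \text{something}\bigr)$ should be arranged to be a martingale using the cocycle-type identity that stationarity (\ref{av}) provides: $\nu(B) = \int_G \nu(g^{-1} B)\, d\mu(g)$, iterated, gives $\nu(B) = \int_{\Omega} \nu\bigl((\xi_1 \cdots \xi_n)^{-1} B\bigr)\, dP(\omega)$ with the natural filtration making this a martingale in $n$. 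By the MCT this converges a.s.\ to some limit. The key point to extract is that along a $P$-generic $\omega$, the sequence of measures $\nu_n^\omega := (\xi_1(\omega) \cdots \xi_n(\omega))_* \nu$ (or their pullbacks) stabilizes in a suitable weak sense, so that consecutive measures $\nu_n^\omega$ and $\nu_{n+1}^\omega$ become asymptotically equal; equivalently the random walk's action on $\nu$ settles down.

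Next I would exploit the contradiction hypothesis. If $\nu(A \cap g^{-1}A) = 0$ for all $g \neq e$, then for any finite set of distinct group elements $g_1, \dots, g_k$ the sets $g_1^{-1}A, \dots, g_k^{-1}A$ are pairwise disjoint mod $\nu$, hence $\sum_i \nu(g_i^{-1}A) \le 1$. The aim is to produce, for arbitrarily large $k$, group elements realized as finite products of elements of $S(\mu)$ (which is possible since $S(\mu)$ generates the infinite group $G$, so there are infinitely many distinct such products) along which $\nu(g_i^{-1}A)$ stays bounded below — contradicting $\sum_i \nu(g_i^{-1}A) \le 1$ once $k$ is large. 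The martingale/stationarity input is precisely what guarantees that the "mass of $A$ seen from the walk", $\nu((\xi_1 \cdots \xi_n)^{-1}A)$, does not decay: its $P$-average is exactly $\nu(A) > 0$ for every $n$ by (\ref{limit})-type reasoning, and a.s.\ convergence plus this constant mean forces the limit to have the same integral, so there is positive-probability set of $\omega$ on which infinitely many of the $\nu((\xi_1 \cdots \xi_n)^{-1}A)$ exceed a fixed $c > 0$. Picking such an $\omega$ and enough indices $n_1 < n_2 < \cdots < n_k$ with the partial products $\xi_1 \cdots \xi_{n_j}$ pairwise distinct — using that $G$ is infinite so we cannot have the walk trapped in a finite set with positive probability (again a consequence of $S(\mu)$ generating $G$ and a recurrence/transience dichotomy, or more simply by a direct counting argument) — yields $k$ distinct elements $g_j$ with $\nu(g_j^{-1}A) \ge c$, and $kc \le 1$ fails for $k > 1/c$.

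The main obstacle I anticipate is the step producing \emph{distinct} partial products along which the $A$-mass stays large: a priori the random walk could return to the identity or to a small set very often, making the $g_j$ coincide. Handling this cleanly is where infiniteness of $G$ and the generation hypothesis on $S(\mu)$ must really be used — one needs that the walk does not with positive probability remain forever in any fixed finite subset of $G$, which should follow because such confinement would contradict the spread of the walk implied by $\langle S(\mu) \rangle = G$ being infinite. A secondary technical point is verifying the martingale identity in the non-singular (not measure-preserving) setting, where one must be careful that the relevant Radon–Nikodym cocycles are well-behaved; but stationarity (\ref{av}) is tailored exactly to make the conditional-expectation identity hold, so this should go through.
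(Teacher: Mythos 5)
Your plan is correct in substance and shares the paper's two main ingredients: the scalar martingale $F_n(\omega)=\nu\bigl((\xi_1\cdots\xi_n)^{-1}A\bigr)$, which is a martingale precisely because of \eqref{av}, together with the pigeonhole observation that one cannot have $k>1/c$ pairwise $\nu$-disjoint sets each of measure $\ge c$. (Your contrapositive formulation is equivalent to the paper's direct one, and your disjointness claim $\nu(g_i^{-1}A\cap g_j^{-1}A)=\nu\bigl(g_i^{-1}(A\cap g_ig_j^{-1}A)\bigr)=0$ does use the standing non-singularity assumption.) Where you genuinely diverge is in how the non-triviality of the recurrence element is secured. The paper's Lemma constructs in advance a ``dissociated'' sequence $\al_1,\al_2,\dots$ in the semigroup $\Sigma(\mu)$ whose increasing-index products never degenerate to the identity, and then waits for the random walk to spell these words consecutively after the random time $n(\omega)$; distinctness of the resulting partial products is then automatic. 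You instead take arbitrary distinct partial products $w_{n_1},\dots,w_{n_k}$ of the walk, for which $w_{n_i}w_{n_j}^{-1}\ne e$ is tautological, and shift the entire burden onto the claim that the walk almost surely leaves every finite subset of $G$ infinitely often. Your route is arguably cleaner, and note that almost-sure convergence gives you more than you ask for: on the positive-probability event $\{\lim F_n>c\}$ one has $F_n>c/2$ for \emph{all} large $n$, so any $k$ distinct partial products past $n(\omega)$ will do.

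The one step you must actually write out is the escape-from-finite-sets claim, which you only assert ``should follow.'' It does, and by exactly the ingredients the paper uses elsewhere: a finite subsemigroup of a group is a group, so $\Sigma(\mu)$ is infinite; hence for any finite $F\subset G$ there is $s\in\Sigma(\mu)\setminus F^{-1}F$, expressible as a word $s_1\cdots s_r$ in $S(\mu)$; since $G$ is discrete each $\mu(\{s_i\})>0$, so by Borel--Cantelli along disjoint blocks the walk almost surely spells $s$ as a consecutive block, forcing $w_n^{-1}w_{n+r}=s\notin F^{-1}F$ and hence ruling out confinement to $F$; finally $G$ is countable, so one takes a union over all finite $F$. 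With that paragraph supplied, your proof is complete.
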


We start with a lemma.

\begin{lemma}
If $\Sigma(\mu)$ is the semigroup in $G$ generated by $S(\mu)$,
there exists a sequence of elements $\al_1,\al_2, \al_3, \dots \in \Sigma(\mu)$
such that no product $\al_{i_1}\al_{i_2}\cdots\al_{i_n}$ with
$i_1 < i_2 < i_3 < \cdots <i_n$ equals the identity element of $G$.
\end{lemma}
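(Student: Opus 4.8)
The plan is to reduce the lemma to the single fact that the semigroup $\Sigma(\mu)$ is infinite, and then build the sequence greedily by induction on the index.

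First I would establish that $\Sigma(\mu)$ is infinite. The crucial elementary observation is that a finite sub-semigroup $H$ of a group is automatically a subgroup: for $a \in H$ the powers $a, a^2, a^3, \dots$ all lie in $H$, so finiteness forces $a^p = a^q$ for some $p < q$, whence $a^{q-p} = e \in H$ and $a^{-1} = a^{q-p-1} \in H$. Consequently, if $\Sigma(\mu)$ were finite it would be a subgroup of $G$ containing $S(\mu)$, and hence would contain the group generated by $S(\mu)$, namely $G$ itself --- contradicting the hypothesis that $G$ is infinite. So $\Sigma(\mu)$ is infinite.

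Next I would construct $\al_1, \al_2, \al_3, \dots$ inductively so that, writing $P_k$ for the (finite) set of all products $\al_{i_1}\al_{i_2}\cdots\al_{i_n}$ with $1 \le i_1 < i_2 < \cdots < i_n \le k$ and $n \ge 1$, no element of $P_k$ equals the identity. For the base case one only needs $\al_1 \ne e$. For the inductive step, note that every increasing-index product whose largest index is $k+1$ must have $\al_{k+1}$ as its rightmost factor (since the indices increase), so it equals either $\al_{k+1}$ itself or $g\,\al_{k+1}$ for some $g \in P_k$. Thus it suffices to choose $\al_{k+1} \in \Sigma(\mu)$ lying outside the finite set $\{e\} \cup \{\, g^{-1} : g \in P_k \,\}$, which is possible precisely because $\Sigma(\mu)$ is infinite. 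Passing to the limit, the sequence $(\al_k)$ has the required property (note that repeated values among the $\al_k$ cause no difficulty, since the constraints only ever involve distinct indices).

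The only real content here is the infinitude of $\Sigma(\mu)$, obtained from the remark that a finite sub-semigroup of a group is a subgroup; once that is in hand the induction is pure bookkeeping. The one point to watch in the inductive step is the structural observation that, because the indices are strictly increasing, the factor of largest index always sits at the right end of the product, so the ``new'' products at stage $k+1$ are exactly the right-translates by $\al_{k+1}$ of the elements of $P_k \cup \{e\}$ --- this is what keeps the excluded set finite at each stage.
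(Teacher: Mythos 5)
Your proof is correct and follows essentially the same route as the paper: infinitude of $\Sigma(\mu)$ via the fact that a finite subsemigroup of a group is a group (together with the hypothesis that $S(\mu)$ generates the infinite group $G$), followed by a greedy induction in which $\al_{n+1}$ avoids a finite set of forbidden values. You have merely made explicit the details the paper leaves implicit, namely the precise excluded set $\{e\}\cup\{g^{-1}: g\in P_k\}$ and why the new factor always appears on the right.
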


\begin{proof}
The semigroup $\Sigma(\mu)$ is infinite since a finite subsemigroup
of a group is a group. We proceed inductively so that having defined
$\al_1,\al_2,\dots,\al_n$ where products don't degenerate, we can find
$\al_{n+1} \in \Sigma(\mu)$ so that no product
$$
\al_{i_1}\al_{i_2} \cdots \al_{i_s} \al_{n+1} \allowbreak = \id,
$$
there being only finitely many values to avoid.
\end{proof}

\begin{proof}[Proof of the theorem \ref{thm:Po}:]
The proof is based on two ingredients. First, if we define functions on $\Om$ by
$$
F_n(\om) = \nu(\xi_n^{-1}\xi_{n-1}^{-1}\cdots\xi_1^{-1}A)
$$
then by (\ref{av}), the sequence $\{F_n\}$ forms a martingale. The
second ingredient is the fact that in almost every sequence
$\xi_1(\om), \xi_2(\om), \dots, \allowbreak \xi_n(\om), \dots$
every word in the ``letters" of $S(\mu)$ appears infinitely far
out, and then every element in $\Sigma(\mu)$ appears as a partial
product. Now let $f(\om) = \lim F_n(\om)$, which by the MCT is
defined almost everywhere, then $\E(f) = \int \nu(g^{-1}A)\,
d\mu(g)=\nu(A) >0$. So, if $\del =\nu(A)/2$, there will be a
random variable $n(\om)$ which is finite with positive probability
so that for $n > n(\om)$
$$
\nu(\xi_n^{-1}\xi_{n-1}^{-1}\cdots\xi_1^{-1}A) > \del.
$$
Now choose $\al_1,\al_2, \al_3, \dots \in \Sigma(\mu)$  as in the foregoing
lemma, and let $N > 1/\del$.
With positive probability there is $l \ge n(\om)$ and
$0 =r_0 < r_1 < r_2 < \cdots < r_N$ so that in $\Sigma(\mu)$,
$$
\xi_{l + r_{i-1} +1}(\om)\xi_{l + r_{i-1} +2}(\om)\cdots \xi_{l + r_i}(\om) =\al_i,
$$
for $i=1,2,\dots,N -1$.
By definition of $n(\om)$
$$
\nu(\al_i^{-1}\cdots\al_l^{-1}\beta^{-1} A) > \del \qquad i=1,2,\dots,N,
$$
where $\beta = \xi_1\xi_2 \cdots\xi_{l-1}$.
But this yields $N$ sets of measure $> 1/N$ in $X$ and we conclude that
for some $ i < j$
$$
\nu(\al_i^{-1}\cdots\al_l^{-1}\beta^{-1} A \cap
\al_j^{-1}\cdots\al_l^{-1}\beta^{-1} A) > 0.
$$
This however implies that for a conjugate $\gamma$ of the product
$\al^{-1}_j\al_{j-1}^{-1}\cdots\al_{i+1}^{-1}$ we have
$\nu(A \cap \ga A) >0$. Here $\gamma \not = \id$ since by
construction $\al_{i+1}\al_{i+2}\cdots\al_j \allowbreak \not= \id$.
\end{proof}

\section{Multiple recurrence for SAT actions}\label{sec:SAT}

Our main result is a multiple recurrence theorem for stationary actions.
We proceed step by step proving the theorem
first for the special category of actions known as SAT actions. These were introduced by Jaworski in \cite{Ja}.

\begin{definition}
The action of a group $G$ on a probability measure space $(X,\nu)$ is
SAT (strongly approximately transitive) if for every measurable $A \subset X$
with $\nu(A) >0$, we can find a sequence $\{g_n\}\subset G$ with
$\nu(g_nA) \to 1$.
\end{definition}

We now have a second recurrence result:

\begin{theorem}\label{thm:SAT}
If $(X,\nu)$ is a probability measure space on which the group $G$
acts by non-singular transformations and the $G$ action is SAT,
then for every measurable $A \subset X$ with $\nu(A) >0$
and any integer $k \ge 1$, there is a $\gamma \in G,\ \gamma \not=\id$ with
\begin{equation}\label{cat}
\nu(A \cap \gamma^{-1}A \cap \gamma^{-2}A \cap
\cdots \cap \gamma^{-k}A) > 0\ .
\end{equation}

Moreover if $F$ is any finite subset of $G$, $\gamma$ can be chosen outside of $F$.
\end{theorem}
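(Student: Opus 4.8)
The plan is to exploit the SAT property to move $A$, by a single group element, onto a set of measure so close to $1$ that $k+1$ of its translates under a fixed $\gamma$ must all overlap. Concretely: fix $k$ and the finite set $F$. Using the SAT hypothesis, choose $g \in G$ with $\nu(gA) > 1 - \frac{1}{k+1}$, and set $B = gA$. Then $\nu(B) > \frac{k}{k+1}$. The idea is to find $\gamma \neq \id$, $\gamma \notin gFg^{-1}$, with $\nu(B \cap \gamma^{-1}B \cap \cdots \cap \gamma^{-k}B) > 0$; pulling back by $g$ gives $\nu(A \cap (g^{-1}\gamma g)^{-1}A \cap \cdots) > 0$, and $g^{-1}\gamma g \notin F$, $g^{-1}\gamma g \neq \id$, which is what we want. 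So it suffices to produce, for a set $B$ of measure $> \frac{k}{k+1}$, a nontrivial $\gamma$ (avoiding a prescribed finite set) with all $k+1$ translates $\gamma^{-j}B$ having a common point of positive measure.

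The second step is to get enough translates in the first place. For a \emph{single} $\gamma$ we have $\nu(\gamma^{-j}B) = \nu(\gamma^{-j}gA)$, and there is no reason $\gamma^{-j}g$ should again push $A$ onto a large set — non-singularity alone does not preserve measure, so we cannot simply say each $\gamma^{-j}B$ still has measure $> \frac{k}{k+1}$. This is the main obstacle, and I expect it to be handled exactly as in the proof of Theorem \ref{thm:Po}: rather than iterating one element, one works in the Poisson-boundary-type probability space $(\Om,P)$. Define the martingale $F_n(\om) = \nu(\xi_n^{-1}\cdots\xi_1^{-1}B)$; by \eqref{av} it is a martingale, and by the MCT it converges a.s.\ to a limit $f$ with $\E(f) = \nu(B) > \frac{k}{k+1}$. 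Hence with positive probability the sample path satisfies $\nu(\xi_n^{-1}\cdots\xi_1^{-1}B) > \frac{k}{k+1}$ for all $n$ beyond some a.s.-finite $n(\om)$. Writing $h_n = \xi_1\cdots\xi_n$, on this event $\nu(h_n^{-1}B) > \frac{k}{k+1}$ for all large $n$.

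Now pick any $n_0 = n(\om)$ on this event and then any later indices $n_1 < n_2 < \cdots < n_{k}$; the $k+1$ sets $h_{n_j}^{-1}B$, $j = 0,1,\dots,k$, each have measure $> \frac{k}{k+1}$, so their pairwise-and in fact total intersection has positive measure: $\nu\bigl(\bigcap_{j=0}^{k} h_{n_j}^{-1}B\bigr) > 0$, since the union of their complements has measure $< (k+1)\cdot\frac{1}{k+1} = 1$. Translating by $h_{n_0}$, this says $\nu\bigl(\bigcap_{j=0}^{k} (h_{n_0}^{-1}h_{n_j})^{-1}B\bigr) > 0$ where we have used that translation by a group element takes positive-measure sets to positive-measure sets. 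The remaining task is to arrange that the elements $h_{n_0}^{-1}h_{n_j}$ form a geometric-type progression $\gamma^0,\gamma^1,\dots,\gamma^k$ for a single nontrivial $\gamma$. For this I would use the same word-combinatorics device as in Theorem \ref{thm:Po}: almost surely every finite word in the letters of $S(\mu)$ occurs infinitely often along the sample sequence, so one can choose the gaps $n_{j-1}+1,\dots,n_j$ to spell out a \emph{fixed} word $w$, whence $h_{n_0}^{-1}h_{n_j} = (h_{n_0}^{-1}h_{n_1})^{j}$; taking $\gamma = h_{n_0}^{-1}h_{n_1}$, and choosing the word $w$ long enough (and not equal to any word representing an element of the finite exceptional set, of which there are only finitely many to avoid, exactly as in the Lemma following Theorem \ref{thm:Po}) guarantees $\gamma \neq \id$ and $\gamma$ outside the prescribed finite set. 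This yields \eqref{cat} after conjugating back by $g$. I expect the only delicate point to be the measurability/selection step ensuring the event "some late block spells $w$ and $\nu(h_{n_0}^{-1}B)$ is large" has positive probability simultaneously — but this follows by combining the a.s.\ statements (MCT limit bound, and recurrence of every word) and intersecting two full-or-positive-probability events.
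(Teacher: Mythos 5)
You correctly isolate the real difficulty --- that non-singularity does not preserve measure, so after choosing $g$ with $\nu(gA)$ large there is no a priori lower bound on $\nu(\gamma^{-j}gA)$ --- but the way you resolve it is not available under the hypotheses of Theorem \ref{thm:SAT}, and it misses the simple fix that makes the proof short. Your sequence $F_n(\om)=\nu(\xi_n^{-1}\cdots\xi_1^{-1}B)$ is a martingale only because of the stationarity identity \eqref{av}; Theorem \ref{thm:SAT} assumes no measure $\mu$ on $G$ and no stationarity of $\nu$, only that the action is non-singular and SAT, so there is no space $(\Om,P)$ to work in. Likewise the word-recurrence device you import from Theorem \ref{thm:Po} needs $G$ discrete and $S(\mu)$ generating $G$, neither of which is assumed here. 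So the second half of your argument proves, at best, a different theorem with extra hypotheses, and in particular could not be applied in Section 4, where Theorem \ref{thm:SAT} is invoked for the SAT factor exactly as stated.

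The missing idea is Lemma \ref{lem:ac}: a single fixed non-singular map $\sigma$ sends sets of sufficiently small measure to sets of measure $<\ep$ (otherwise one produces a null set with non-null image). Hence the quantifiers should be reversed relative to your argument: first fix any $\sigma\neq\id$, then use Proposition \ref{prop:finite} to choose $\del>0$, depending on $\sigma$ and $k$, so that $\nu(B)>1-\del$ forces $\nu(B\cap\sigma B\cap\cdots\cap\sigma^k B)>0$ (each of the $k+1$ images of the complement $B'$ has measure $<\tfrac1{k+1}$), and only then invoke SAT to get $g$ with $\nu(g^{-1}A)>1-\del$. Applying $g$ and setting $\ga=g\sigma^{-1}g^{-1}$ gives \eqref{cat}. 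Your error was to fix the threshold $1-\tfrac1{k+1}$ before knowing which maps would act on $B$; once $\del$ is allowed to depend on $\sigma$, the ``main obstacle'' disappears and no martingale or word combinatorics is needed. For the final clause, the paper takes a finite $H\subset G$ with $|H|>|F|$ and a sequence $g_n$ with $\nu(g_n^{-1}A)\to1$; since each set $g_nHg_n^{-1}$ must contain an element outside $F$, the pigeonhole principle yields a single $\sigma\in H$ with $g_{n_j}\sigma g_{n_j}^{-1}\notin F$ along a subsequence, which completes the argument.
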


We use the following basic lemma from measure theory.

\begin{lemma}\label{lem:ac}
If $\sig : X \to X$ is a non-singular transformation with respect to a
measure $\nu$ on $X$, then for any $\ep > 0$, there exists a $\del >0$
so that $\nu(A) < \del$ implies $\nu(\sig A) < \ep$.
\end{lemma}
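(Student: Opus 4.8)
The plan is to recognize this as the classical $\ep$--$\del$ reformulation of absolute continuity and to reduce the statement to it. First I would introduce the set function $\lambda(A):=\nu(\sig A)$ on measurable $A\subset X$. Since $\sig$ is a (bimeasurable, invertible) non-singular transformation, disjoint sets have disjoint images under $\sig$, so $\lambda$ is countably additive; moreover $\lambda(X)=\nu(\sig X)\le\nu(X)<\infty$, so $\lambda$ is a finite measure. The non-singularity hypothesis, $\nu(A)=0\Rightarrow\nu(\sig A)=0$, is precisely the statement that $\lambda\ll\nu$. Thus the lemma is exactly the assertion that for a finite measure $\lambda$ absolutely continuous with respect to $\nu$, for every $\ep>0$ there is $\del>0$ with $\nu(A)<\del\Rightarrow\lambda(A)<\ep$.

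I would then prove this standard fact by contradiction. If it failed, there would be some $\ep_0>0$ and measurable sets $A_n$ with $\nu(A_n)<2^{-n}$ yet $\lambda(A_n)\ge\ep_0$. Put $B_n=\bigcup_{k\ge n}A_k$ and $B=\bigcap_n B_n$. On the one hand $\nu(B_n)\le\sum_{k\ge n}2^{-k}=2^{-(n-1)}\to 0$, so $\nu(B)=0$. On the other hand $A_n\subset B_n$ gives $\lambda(B_n)\ge\ep_0$, and since the $B_n$ decrease and $\lambda$ is finite, continuity from above yields $\lambda(B)=\lim_n\lambda(B_n)\ge\ep_0>0$. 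This contradicts $\lambda\ll\nu$. (Alternatively, one may invoke the Radon--Nikodym theorem to write $\lambda(A)=\int_A f\,d\nu$ with $f\in L^1(\nu)$, choose $M$ so that $\int_{\{f>M\}}f\,d\nu<\ep/2$, and take $\del=\ep/(2M)$; then $\lambda(A)\le M\nu(A)+\ep/2<\ep$ whenever $\nu(A)<\del$.)

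The one point that genuinely requires care is the passage that makes $\lambda$ a countably additive measure rather than a merely subadditive set function: this uses that $\sig$ is injective, so that $\sig\big(\bigsqcup A_k\big)=\bigsqcup\sig A_k$ is a disjoint union. In the present context this is automatic, since the transformations in question are group elements acting invertibly, and the continuity-from-above step—which is precisely where the argument would break for a genuinely non-injective map—is therefore justified. With $\lambda$ a finite measure absolutely continuous with respect to $\nu$, the contradiction argument closes without further difficulty.
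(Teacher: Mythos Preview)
Your proof is correct and follows essentially the same route as the paper, just with considerably more detail. The paper's proof is a single sentence: if no such $\del$ existed one could find $B\subset X$ with $\nu(B)=0$ and $\nu(\sig B)\ge\ep$, contradicting non-singularity; your $\limsup$ construction is precisely the way to produce this $B$, and your observation about countable additivity of $\lambda$ (equivalently, continuity from above for $A\mapsto\nu(\sig A)$) is exactly what is implicitly being used.
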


\begin{proof}
If such a $\del$ did not exist we could find $B \subset X$ with $\nu(B)=0$
and $\nu(\sig B) \ge \ep$.
\end{proof}

\begin{prop}\label{prop:finite}
Assume $G$ acts on $(X,\nu)$ by non-singular transformations
and let $\ga_1, \ga_2, \dots,\ga_k \in G$. There exists $\del >0$ so that if
$\nu(B) > 1 - \del$ then
$$
\nu( \ga_1 B \cap \ga_2 B \cap \cdots \cap \ga_k B) > 0.
$$
\end{prop}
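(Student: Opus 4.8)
The plan is to derive this directly from Lemma \ref{lem:ac} by a complementation argument; notably, neither stationarity nor measure preservation is needed, only that $G$ acts by non-singular transformations. The key observation is that each $\ga_i$ acts as an \emph{invertible} non-singular map $T_{\ga_i}:X\to X$, so Lemma \ref{lem:ac} can be applied to $\sig=T_{\ga_i}$.

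Concretely, I would first fix $\ep=1/(2k)$ and, for each $i=1,\dots,k$, invoke Lemma \ref{lem:ac} to obtain $\del_i>0$ such that $\nu(C)<\del_i$ implies $\nu(\ga_i C)<1/(2k)$. Setting $\del=\min_{1\le i\le k}\del_i>0$, suppose $\nu(B)>1-\del$. Then $\nu(X\setminus B)<\del\le\del_i$, hence $\nu(\ga_i(X\setminus B))<1/(2k)$ for every $i$. Since $T_{\ga_i}$ is a bijection, $\ga_i(X\setminus B)=X\setminus \ga_i B$, so $\nu(X\setminus \ga_i B)<1/(2k)$ for each $i$.

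Finally, by subadditivity,
$$
\nu\Bigl(X\setminus\bigcap_{i=1}^k \ga_i B\Bigr)=\nu\Bigl(\bigcup_{i=1}^k (X\setminus \ga_i B)\Bigr)\le\sum_{i=1}^k \nu(X\setminus \ga_i B)<k\cdot\frac{1}{2k}=\frac12,
$$
so $\nu(\ga_1 B\cap\ga_2 B\cap\cdots\cap\ga_k B)>\tfrac12>0$, which is the assertion. The argument is entirely routine; the only point requiring any care is the identity $X\setminus \ga_i B=\ga_i(X\setminus B)$, which is exactly where one uses that the action is by bijections rather than merely by non-singular maps. I do not anticipate any real obstacle here.
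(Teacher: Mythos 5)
Your argument is correct and is essentially the paper's own proof: both bound $\nu(\ga_i(X\setminus B))$ via Lemma \ref{lem:ac} and conclude by subadditivity on the complement of the intersection (the paper uses the threshold $1/k$ where you use $1/(2k)$, an immaterial difference). No further comment is needed.
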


\begin{proof}
The desired inequality will take place provided the measure of each
$\ga_i B'$ is less than $1/k$, where $B' = X \setminus B$.
By Lemma \ref{lem:ac} this will hold if $\nu(B')$ is sufficiently small.
\end{proof}

\begin{proof}[Proof of the theorem \ref{thm:SAT}:]
Let $\sig \not= \id$ be any element of $G$. Apply Proposition \ref{prop:finite}
with 
$\ga_0 = \id$, 
$\ga_i = \sig^{-i},\ i=1,2,\dots ,k$ and find $\del > 0$ so that $\nu(B)
> 1 -\del$ implies
$$
\nu(B \cap \sig B \cap \sig^2 B \cap
\cdots \cap \sig^k B) > 0\ .
$$
Use the SAT property to find $g \in G$  with $\nu(g^{-1}A) > 1 -\del$.
Then
$$
\nu(g^{-1}A \cap \sig g^{-1}A \cap \sig^2 g^{-1}A \cap
\cdots \cap \sig^k g^{-1}A) > 0.
$$
Applying $g$ to the set appearing here we get:
$$
\nu(A \cap g \sig g^{-1}A \cap  g \sig^2 g^{-1}A \cap
\cdots \cap g \sig^k g^{-1}A) > 0.
$$
Letting $\ga= g  \sig^{-1} g^{-1}$ we obtain the desired result.

We turn now to the last statement of the theorem.  One sees easily that if $G$ has a non-trivial SAT action then $G$ is infinite.  Let $H$ be a finite subset of $G$ with greater cardinality than $F$.  Now in the foregoing discussion we consider a sequence $\{ g_n\}$ in $G$ with $\nu (g_n^{-1} A) \to 1$; then for any $\sigma$, if $n$ is sufficiently large if we take 
$\ga = g_n \sigma g_n^{-1}$ for large $n$ we will get \eqref{cat}. We claim that $\sigma$ can be chosen so that for an infinite subsequence $\{n_j\}$ we will have $g_{n_j} \sigma g_{n_j}^{-1} \notin F$. For this we simply consider the sets $\{ g_n Hg_n^{-1}\}$ each of which has some element outside of $F$. $\{ n_j\}$  is then a sequence for which there is a fixed $\sigma \in H$ with $g_{n_j} \sigma g_{n_j}^{-1} \notin F$.  This completes the proof.
\end{proof}

\section{A Structure theorem for Stationary Actions}

In order to formulate our structure theorem we will introduce a few definitions
and some well known basic tools from the general theory of dynamical systems.

\subsection{Factors and the disintegration of measures}
\begin{definition}
Let $(X, \nu)$ and $(Y,\rho)$ be two $(G,\mu)$ spaces. A measurable map
$\pi :  (X, \nu) \to (Y,\rho)$ is called {\em a factor map}, or {\em an extension}, depending on the view point, if it intertwines the group actions: for every
$g \in G$, $g\pi(x) = \pi(gx)$ for $\nu$ almost every $x \in X$.
\end{definition}

\begin{definition}
If $(Y,\rho)$ is a factor of $(X, \nu)$ we can decompose the measure $\nu$ as
$\nu = \int\limits_Y \nu_y d\rho (y)$, where the $\nu_y$ are
probability measures on $X$ with $\nu_y (\pi^{-1}(y)) = 1$ and the map
$y \mapsto \nu_y$ is measurable from $Y$ into the space of
probability measures on $X$, equipped with its natural Borel structure.
We say $(X,\nu)$ is a {\em measure preserving extension} of $(Y,\rho)$ if for each $g \in G,\; g\nu_y = \nu_{gy}$ for almost every $y\in Y$.
Note that a stationary system $(X,\nu)$ is measure preserving (i.e.
$g\nu =\nu$ for every $g\in G$) if and only if the extension $\pi : X \to Y$,
where the factor $(Y,\rho)$ is the trivial one point system,
is a measure preserving extension.
\end{definition}

\subsection{Topological models}

We begin this subsection with some remarks regarding stationary actions of $(G,\mu)$ on $(X, \nu)$ in the case  that $X$ is a compact metric space.  We then speak of a topological stationary system. In this case we can form the measure-valued martingale
\begin{equation*} 
\theta_n (\omega) = \xi_1\xi_2\cdots \xi_n \nu.
\end{equation*}
The martingale convergence theorem is valid also in this context by the separability of $\mathcal{C} (X)$, and so we obtain a measure-valued random variable $\theta (\omega) = \mathop{\lim}\limits_{n\to \infty} \theta_n (\omega)$.

\begin{definition}
A topological stationary system $(X,\nu)$ is {\em proximal} if with probability 1, the measure $\theta (\omega)$ is a Dirac measure: $\theta (\omega) = \delta_{z(\omega)}$.
\end{definition}

\begin{definition}
A stationary system $(X,\nu)$ is {\em proximal} if every compact metric factor
$(X', \nu')$ is proximal.
\end{definition}

\begin{definition}
Let $(X,\nu)$ and $(X', \nu')$ be two $(G,\mu)$ stationary systems,
and suppose that $X'$ is a compact metric space. We say that the
stationary system $(X', \nu')$ is {\em a topological model} for
$(X,\nu)$ if there is an isomorphism of the measure spaces $\phi :
(X,\nu) \to (X', \nu')$ which intertwines the $G$ actions.
\end{definition}

The following proposition is well known and has several proofs. We will be
content here with just a sketch of an abstract construction.

\begin{prop}
Every $(G,\mu)$ system $(X,\nu)$ admits a topological model. Moreover,
if $A \subset X$ is measurable we can find a topological model
$\phi : (X,\nu) \to (X', \nu')$ such that the set $A' =\phi(A)$ is a clopen
subset of the compact space $X'$.
\end{prop}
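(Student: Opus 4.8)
The plan is to build the topological model by embedding $(X,\nu)$ into a product of standard spaces via a countable family of bounded measurable functions, being careful to include enough functions to separate points, to generate the $\sigma$-algebra, and — for the ``moreover'' part — to include the indicator $\ch_A$ itself. First I would recall the standard fact that any $(G,\mu)$ system, being measurably an action of a second countable group on a standard probability space, can be realized so that $L^\infty(X,\nu)$ is separable in an appropriate sense; concretely, fix a countable subalgebra $\Acal \subset L^\infty(X,\nu)$ that is dense in $L^2(X,\nu)$, is closed under the action of a countable dense subset of $G$ (and under complex conjugation, products, rational-coefficient linear combinations), and contains $\ch_A$. Since $\Acal$ is a countable commutative algebra, its closure in $L^\infty$ norm is a separable $C^*$-algebra, hence isomorphic to $C(X')$ for a compact metric space $X'$ (its Gelfand spectrum); the inclusion $\Acal \hookrightarrow L^\infty(X,\nu)$ dualizes to a measure-space isomorphism $\phi : (X,\nu) \to (X',\nu')$ where $\nu' = \phi_*\nu$, and the $G$-action, being implemented by the algebra maps on a dense subgroup, extends to a continuous (or at least measurable, intertwining) action on $X'$ making $\nu'$ stationary.

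The second step is to verify the three things this construction must deliver: (i) $\phi$ is a genuine isomorphism of measure spaces, which follows because $\Acal$ generates the full $\sigma$-algebra of $(X,\nu)$ up to null sets (a standard separability argument for standard Borel spaces), so no information is lost; (ii) the $G$-action is intertwined — here one only has a dense subgroup acting continuously and must invoke the measurable-to-continuous rigidity for actions of locally compact second countable groups, or simply note that stationarity and the factor property are preserved under the completion since the relevant identities $g\pi(x) = \pi(gx)$ hold $\nu$-a.e. on a generating set and pass to all of $G$ by the measurability hypothesis on $G \times X \to X$; (iii) the clopen property: because $\ch_A \in \Acal \subset C(X')$, the function $\phi(\ch_A) = \ch_{A'}$ is continuous and $\{0,1\}$-valued on $X'$, and a continuous $\{0,1\}$-valued function has clopen level sets, so $A' = \phi(A) = (\phi(\ch_A))^{-1}(1)$ is clopen.

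I would present this only in sketch form, matching the paper's stated intention (``just a sketch of an abstract construction''): state that $(X,\nu)$ is a standard probability space, pick the countable $*$-subalgebra $\Acal$ with the closure/invariance/containing-$\ch_A$ properties, take $X'$ to be its Gelfand spectrum with $\nu'$ the pushforward, and observe that each of the required conclusions is immediate from the corresponding property built into $\Acal$. The main obstacle — the point most deserving of care rather than a one-line wave — is step (ii): ensuring the $G$-action genuinely transfers to a bona fide stationary $(G,\mu)$-action on the compact model, not merely an action of a dense subgroup, and that $\phi$ intertwines it in the almost-everywhere sense demanded by the definition of factor map. This is where one leans on second countability of $G$ and the measurability of the action map; everything else (separability giving a metric compact model, $\ch_A$ giving clopenness) is formal.
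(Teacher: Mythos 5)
Your construction is the same as the paper's: choose a countable family in $L_\infty(X,\nu)$ spanning $L_2$ and containing $\ch_A$, form the separable $C^*$-algebra it generates together with its translates under a countable dense subgroup $G_0\subset G$, take the Gelfand spectrum as $X'$ with $\nu'$ given by Riesz representation, and obtain clopenness of $A'$ from the idempotent $\tilde f_1^2=\tilde f_1$ being continuous. Your extra care about extending the action from $G_0$ to all of $G$ addresses a point the paper's sketch leaves implicit, but the route is identical.
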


\begin{proof}
Choose a sequence of functions $\{f_n\}\subset L_\infty(X,\nu)$
which spans $L_2(X,\nu)$, with $f_1 = {\bf 1}_A$. Let $G_0 \subset G$ be a countable dense subgroup and let $\Acal$ be the 
$G_0$-invariant closed unital $C^*$-subalgebra of $L_\infty(X,\nu)$ which is generated by $\{f_n\}$. We let $X'$ be the, compact metric, Gelfand space which corresponds to the $G$-invariant, separable, $C^*$-algebra $\Acal$.
Since $f_1^2 = f_1$ we also have
${\tilde{f}_1}^2 = \tilde{f}_1$, where the latter is the element of $C(X')$
which corresponds to $f_1$. Since $\tilde{f}_1$ is continuous it follows
that $A' : = \{x' :  \tilde{f}_1(x') = 1\}$ is indeed a clopen subset of $X'$
with $\tilde{f}_1= {\bf 1}_{A'}$. The probability measure $\nu'$ is the
measure which corresponds, via Riesz' theorem, to the linear
functional $\tilde{f} \mapsto \int f \, d\nu$.
\end{proof}

\begin{prop} \label{prop:pox-sat}
If $(X, \nu)$ is a proximal stationary system for $(G,\mu)$ then the action of $G$ on $(X,\nu)$ is SAT.
\end{prop}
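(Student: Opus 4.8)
The plan is to pass to a topological model in which the target set becomes clopen, and then read off the SAT property from the measure-valued martingale $\theta_n$ together with the martingale convergence theorem. So let $A\subset X$ be measurable with $\nu(A)>0$. First I would apply the preceding proposition to obtain a topological model $\phi:(X,\nu)\to(X',\nu')$ with $X'$ compact metric and with $A':=\phi(A)$ a clopen subset of $X'$. Since $\phi$ is an isomorphism of $(G,\mu)$ spaces it is in particular a factor map, so $(X',\nu')$ is a compact metric factor of $(X,\nu)$; by the hypothesis that $(X,\nu)$ is proximal, this factor is proximal, i.e. $\theta(\omega)=\delta_{z(\omega)}$ almost surely, where $\theta_n(\omega)=\xi_1\xi_2\cdots\xi_n\nu'$ and $\theta(\omega)=\lim_n\theta_n(\omega)$. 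Moreover, as $\phi$ preserves measure and intertwines the actions, $\nu(gA)=\nu'(gA')$ for every $g\in G$, so it suffices to produce $g_n\in G$ with $\nu'(g_nA')\to 1$. From now on I may therefore work inside $(X',\nu')$; to lighten notation write $X,\nu,A$ for $X',\nu',A'$.

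Next I would set $F_n(\omega)=\theta_n(\omega)(A)=\nu\big((\xi_1\xi_2\cdots\xi_n)^{-1}A\big)$. Using stationarity in the form \eqref{av} one checks, exactly as in the proof of Theorem \ref{thm:Po}, that $\{F_n\}$ is a uniformly bounded real martingale adapted to $\xi_1,\dots,\xi_n$, with $F_1(\eta)=\nu(\eta^{-1}A)$. Hence by the MCT the limit $F(\omega)$ exists almost surely and, by \eqref{limit},
$$
\E(F)=\int_G \nu(\eta^{-1}A)\,d\mu(\eta)=\nu(A).
$$
On the other hand, $\theta_n(\omega)\to\delta_{z(\omega)}$ weak$^*$ almost surely and $\mathbf 1_A$ is continuous (this is the one place where clopenness of $A$ is used), so $F(\omega)=\mathbf 1_A(z(\omega))$, a random variable taking only the values $0$ and $1$. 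Therefore $P\big(z(\omega)\in A\big)=\E(F)=\nu(A)>0$.

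Finally I would fix any $\omega$ in the positive-probability event $\{z(\omega)\in A\}$. For that $\omega$ we have $F_n(\omega)=\nu\big((\xi_1(\omega)\cdots\xi_n(\omega))^{-1}A\big)\to 1$, so the group elements $g_n:=\big(\xi_1(\omega)\xi_2(\omega)\cdots\xi_n(\omega)\big)^{-1}\in G$ satisfy $\nu(g_nA)\to 1$. Transporting back through $\phi$ gives the required sequence for the original system, so the $G$-action on $(X,\nu)$ is SAT.

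The only point that I expect to require genuine care is the reduction in the first paragraph: one must be sure that the topological model supplied by the preceding proposition is a genuine isomorphism of measure spaces (so that $A$ lifts to a clopen set and the SAT conclusion descends to $(X,\nu)$), and that the notion ``proximal,'' which for an abstract stationary system is defined through all of its compact metric factors, does apply to this particular model. Once that is in place, the martingale identity and the passage from the weak$^*$ limit $\theta_n(\omega)\to\delta_{z(\omega)}$ to the pointwise statement about the clopen set $A$ are routine.
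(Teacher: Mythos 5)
Your proof is correct and follows essentially the same route as the paper's: pass to a topological model where $A$ becomes clopen, form the martingale $\nu'(\xi_n^{-1}\cdots\xi_1^{-1}A')=\theta_n(\omega)(A')$, use proximality to identify the limit as $\mathbf{1}_{A'}(z'(\omega))$, and use $\E(F)=\nu(A)>0$ to find a positive-probability event on which the martingale tends to $1$, yielding the SAT sequence. The reduction you flag as delicate (that the model is a measure isomorphism and that proximality of $(X,\nu)$ applies to this compact metric factor) is exactly how the paper handles it.
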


\begin{proof}
Let $A$ be a measurable subset of $X$ with  $\nu (A) > 0$.  There is a topological model $(X', \nu')$ of $(X, \nu)$ such that $A$ is the pullback of a closed-open set $A'$ with $\nu' (A') = \nu (A)$.  As in
section \ref{sec:po} we form the martingale 
$\nu' (\xi^{-1}_n \xi^{-1}_{n-1} 
\cdots \xi^{-1}_1 A')$ which converges to $\theta (\omega) (A) = \delta_{z'(\omega)} (A')$, since by the proximality of $(X, \nu)$, the topological factor $(X', \nu')$ is proximal. Now the latter limit is 0 or 1 and since the expectation of $\nu'(\xi^{-1}_n\xi^{-1}_{n-1}\cdots \xi_1^{-1} A)$ is $\nu(A) > 0$, there is positive probability that $z'(\omega) \in A'$.
When this happens
$$
\nu(\xi^{-1}_n \xi^{-1}_{n-1} \cdots\xi_1^{-1} A) = \nu' (\xi^{-1}_n \xi^{-1}_{n-1} 
\cdots\xi_1^{-1} A')\to 1.
$$
This proves that the action is SAT.
\end{proof}

\subsection{The structure theorem}
We now reformulate the structure theorem (theorem 4.3) of \cite{FG} to suit our needs.  (The theorem in \cite{FG} gives  more precise information.)

\begin{theorem} \label{thm:structure}
Every stationary system is a factor of a stationary system which is a measure preserving extension of a proximal system.
\end{theorem}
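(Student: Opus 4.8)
The plan is to realize the required enlargement $(\tilde X,\tilde\nu)$ as a joining of the given system with the Poisson boundary of $(G,\mu)$. Write $(B,\nu_B)$ for this boundary, i.e. the maximal $\mu$-boundary, which is proximal by construction, together with its boundary map $\om\mapsto z(\om)$ from $\Om$ to $B$ which pushes $P$ to $\nu_B$ and satisfies the equivariance relation $z(\om)=\xi_1(\om)\,z(\sig\om)$ under the shift $\sig$ on $\Om$. First I would pass to a topological model of $(X,\nu)$, using the proposition proved above, so that $X$ is compact metric and the measure-valued martingale $\theta_n(\om)=\xi_1\cdots\xi_n\nu$ is available; by the MCT it converges almost surely to a random measure $\theta(\om)\in\Pcal(X)$ with $\E(\theta)=\mu*\nu=\nu$.

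The decisive step is to show that $\theta$ is a function of the boundary point $z(\om)$ through a $G$-equivariant family of measures. From the definition one reads off the cocycle relation $\theta(\om)=\xi_1(\om)\,\theta(\sig\om)$, so $\om\mapsto\theta(\om)$ is a bounded, shift-equivariant, $\Pcal(X)$-valued ``harmonic'' object. By the universality of the boundary for such representations there is a measurable family $\{\lambda_b\}_{b\in B}\subset\Pcal(X)$ with $\theta(\om)=\lambda_{z(\om)}$ almost surely; comparing the two cocycle relations $\theta(\om)=\xi_1\theta(\sig\om)$ and $z(\om)=\xi_1 z(\sig\om)$ forces the equivariance $g\lambda_b=\lambda_{gb}$ for almost every $b$ and all $g$. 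Moreover $\int_B\lambda_b\,d\nu_B(b)=\E(\theta)=\nu$.

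Granting this family, I would set $\tilde X=X\times B$ and define $\tilde\nu=\int_B(\lambda_b\times\delta_b)\,d\nu_B(b)$, so that the disintegration of $\tilde\nu$ over the second coordinate is $\tilde\nu_b=\lambda_b\times\delta_b$. All the required properties are then routine to verify: the $X$-marginal of $\tilde\nu$ is $\int_B\lambda_b\,d\nu_B=\nu$, so the coordinate projection $\tilde X\to X$ is a factor map recovering $(X,\nu)$; stationarity $\mu*\tilde\nu=\tilde\nu$ follows from $\mu*\nu_B=\nu_B$ together with the equivariance of $\{\lambda_b\}$, after pushing $\nu_B$ forward by $g$ and reindexing; the identity $g\tilde\nu_b=g\lambda_b\times\delta_{gb}=\lambda_{gb}\times\delta_{gb}=\tilde\nu_{gb}$ says exactly that $\tilde X\to B$ is a measure preserving extension; and $(B,\nu_B)$ is proximal since it is a $\mu$-boundary, so that all of its compact metric factors are again boundaries.

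The main obstacle is precisely the decisive step: producing the equivariant disintegration $\theta(\om)=\lambda_{z(\om)}$ of the limit measures over the boundary. This is where the detailed structure theorem of \cite{FG} (Theorem~4.3), which tracks the correspondence between the limit measures of a stationary system and its boundary, does the real work; once it is in hand, everything else is bookkeeping with disintegrations and the stationarity identity.
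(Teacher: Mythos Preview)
The paper does not actually prove this theorem here; it is stated as a reformulation of Theorem~4.3 of \cite{FG} and simply quoted from that reference. Your outline is, in fact, exactly the construction carried out in \cite{FG}: one joins $(X,\nu)$ with the Poisson boundary $(B,\nu_B)$ via the equivariant family $\{\lambda_b\}$ of conditional measures coming from the martingale limit $\theta(\om)$, and then checks that the projection $\tilde X\to B$ is a measure-preserving extension of a proximal system while the projection $\tilde X\to X$ recovers the original system. So your sketch is correct and is the standard route.

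One point of clarification: in your last paragraph you say that Theorem~4.3 of \cite{FG} ``does the real work'' at the decisive step. But Theorem~4.3 of \cite{FG} \emph{is} the structure theorem you are asked to prove, so invoking it there is circular. The genuine input you need at that step is not the structure theorem itself but the defining universal property of the Poisson boundary: every bounded $\mu$-harmonic object on $G$ (in particular the $\Pcal(X)$-valued random variable $\theta$ satisfying $\theta(\om)=\xi_1(\om)\,\theta(\sig\om)$) is represented by a measurable function on $B$. That is Furstenberg's boundary theory (cf.\ \cite{F73}), and it is precisely what gives the factoring $\theta(\om)=\lambda_{z(\om)}$ and, combined with the cocycle for $z$, the equivariance $g\lambda_b=\lambda_{gb}$. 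Once you cite that correctly, the remainder of your verification (stationarity of $\tilde\nu$, the two projections, and proximality of $B$) is fine.
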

Alternatively, in view of proposition \ref{prop:pox-sat}:
\begin{theorem}
If $(X,\nu)$ is a stationary action of $(G,\mu)$, there is an extension 
$(X^*, \nu^*)$ of $(X,\nu)$ which is a measure preserving extension of an SAT action of $G$ on a stationary space $(Y,\rho)$.
\end{theorem}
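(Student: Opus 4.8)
The plan is to deduce this statement directly from Theorem \ref{thm:structure} together with Proposition \ref{prop:pox-sat}; essentially no new work is required beyond translating ``proximal'' into ``SAT''. First I would apply Theorem \ref{thm:structure} to the given stationary system $(X,\nu)$: this produces a stationary system $(X^*,\nu^*)$ together with a factor map $\pi : (X^*,\nu^*) \to (X,\nu)$ (so that $(X^*,\nu^*)$ is indeed an extension of $(X,\nu)$), and a further factor map $p : (X^*,\nu^*) \to (Y,\rho)$ onto a \emph{proximal} stationary system $(Y,\rho)$ such that $(X^*,\nu^*)$ is a measure preserving extension of $(Y,\rho)$; that is, writing the disintegration $\nu^* = \int_Y \nu^*_y\, d\rho(y)$ over $p$, one has $g\nu^*_y = \nu^*_{gy}$ for almost every $y$ and every $g \in G$.

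Next I would invoke Proposition \ref{prop:pox-sat}: since $(Y,\rho)$ is a proximal stationary system for $(G,\mu)$, the action of $G$ on $(Y,\rho)$ is SAT. Combining the two facts, $(X^*,\nu^*)$ is an extension of $(X,\nu)$ which is a measure preserving extension of the SAT system $(Y,\rho)$, which is exactly the assertion. I do not anticipate a genuine obstacle here: the entire substantive content is packaged in the structure theorem of \cite{FG} (restated as Theorem \ref{thm:structure}), and the only thing to verify is the cosmetic replacement of the proximality hypothesis on the base by the SAT property, which is immediate from Proposition \ref{prop:pox-sat}. If one wanted to be scrupulous, the single point worth a sentence is that ``measure preserving extension'' is a property of the pair $(X^*,\nu^*) \to (Y,\rho)$ alone and is unaffected by also recording the factor map $\pi$ onto $(X,\nu)$.
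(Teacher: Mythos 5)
Your proposal is correct and is exactly the paper's own (implicit) argument: the paper states this theorem as an immediate reformulation of Theorem \ref{thm:structure}, obtained by replacing ``proximal'' with ``SAT'' via Proposition \ref{prop:pox-sat}. No further comment is needed.
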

This is the basic structure theorem which we will use to deduce a general multiple recurrence result for stationary actions.

\section{Multiple Recurrence for Stationary Actions}

We recall the terminology of \cite{FG}:
\begin{definition}
A $(G,\mu)$ stationary action of on $(X,\nu)$ is
{\em standard} if $(X,\nu)$ is a measure preserving extension of a
proximal action.
\end{definition}

Since proximality implies SAT we can extend this notion and
replace ``proximal" by SAT.
Theorem \ref{thm:structure} asserts that every stationary action has a standard extension.  The nature of recurrence phenomena is such that if such a phenomenon is valid for an extension of a system, it is valid for the system.  Precisely, if $\pi:X\to X'$ and $A'\subset X'$ and for the pullback $A=\pi^{-1}
(A')$ and a set $g_1, g_2, \dots, g_k$, we have
$\nu (g_1^{-1} A\cap g_2^{-1} A\cap \dots \cap g_k^{-1} A) > 0$, then $\nu'(g_1^{-1} A' \cap g_2^{-1} A'\cap \dots \cap g_2^{-1} A')> 0$.
It follows now from theorem \ref{thm:structure} that for a general multiple recurrence theorem for stationary actions, it will suffice to treat standard actions. Using the definition of a standard action we will take advantage of the multiple recurrence theorem proved in Section \ref{sec:SAT} for SAT actions and show that this now extends to any standard action.  For this we use a lemma which is based on Szemer\'edi's theorem.  By the latter there is a function $N(\delta,\ell)$, for $\delta > 0$ and $\ell$ a natural number, so that for $n \ge N(\delta, \ell)$, if $E \subset  \{ 1, 2, 3, \dots , n\}$ with $|E|\ge \delta n$ then $E$ contains an $\ell$-term arithmetic progression. We now have

\begin{lemma} \label{lem:sam}
In any probability space $(\Omega, P)$, for $n \ge N(\delta, \ell)$,
if $A_1, A_2, \allowbreak \dots, A_n$ are $n$ subsets of $\Omega$ with $P(A_i)>\delta$ for
$i = 1, 2, \dots, n$, then there exist $a$ and $d$ so that
\begin{equation*} P(A_{a} \cap A_{a+d} \cap A_{a+d}\cap \dots \cap A_{a+(\ell -1)d})> 0.
\end{equation*}
\end{lemma}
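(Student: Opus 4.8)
The plan is to deduce this lemma directly from the combinatorial form of Szemer\'edi's theorem via a straightforward averaging (first-moment) argument. Suppose, for contradiction, that for \emph{every} pair $a,d$ (with $a\ge 1$, $d\ge 1$, and $a+(\ell-1)d\le n$) the intersection $A_a\cap A_{a+d}\cap\cdots\cap A_{a+(\ell-1)d}$ is a null set. The key observation is that if each such $\ell$-fold intersection along an arithmetic progression vanishes, then for almost every point $\omega\in\Omega$ the set $E(\omega):=\{\,i\in\{1,\dots,n\}: \omega\in A_i\,\}$ contains no $\ell$-term arithmetic progression. Indeed, the set of $\omega$ for which $E(\omega)$ contains some fixed progression is exactly the corresponding $\ell$-fold intersection, and there are only finitely many progressions in $\{1,\dots,n\}$, so a finite union of null sets is null.

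Next I would bring in Szemer\'edi's theorem through the function $N(\delta,\ell)$ quoted just before the lemma: since $n\ge N(\delta,\ell)$, any subset of $\{1,\dots,n\}$ of size at least $\delta n$ contains an $\ell$-term arithmetic progression. Hence, for almost every $\omega$, the progression-free set $E(\omega)$ must satisfy $|E(\omega)|<\delta n$. Integrating, we get
$$
\int_\Omega |E(\omega)|\,dP(\omega) \;\le\; \delta n.
$$
On the other hand, $|E(\omega)|=\sum_{i=1}^n \mathbf 1_{A_i}(\omega)$, so by linearity of the integral (Fubini/Tonelli on a finite sum),
$$
\int_\Omega |E(\omega)|\,dP(\omega) \;=\; \sum_{i=1}^n P(A_i) \;>\; \sum_{i=1}^n \delta \;=\; \delta n,
$$
using the hypothesis $P(A_i)>\delta$ for each $i$. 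This is the desired contradiction: $\delta n < \int |E(\omega)|\,dP \le \delta n$. Therefore some $\ell$-fold intersection $A_a\cap A_{a+d}\cap\cdots\cap A_{a+(\ell-1)d}$ must have positive measure, which is the assertion of the lemma.

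There is no serious obstacle here; the only point requiring a little care is the measurability and null-set bookkeeping in the first step — namely that "$E(\omega)$ contains \emph{no} $\ell$-AP for a.e.\ $\omega$" is a legitimate conclusion from finitely many intersections being null, and that $\omega\mapsto|E(\omega)|$ is measurable (it is a finite sum of indicator functions of measurable sets). One should also note the trivial strict-versus-nonstrict inequality: the contrapositive of Szemer\'edi gives $|E(\omega)|<\delta n$ (strict) on a full-measure set, while the averaging gives $\sum P(A_i)>\delta n$ (strict), and either strictness alone suffices to close the contradiction, so no boundary case arises. The argument is insensitive to whether $(\Omega,P)$ is atomic or non-atomic and uses nothing about $\Omega$ beyond it being a probability space, exactly as stated.
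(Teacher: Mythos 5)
Your argument is correct and is essentially the paper's own proof run in contrapositive form: both rest on the counting function $|E(\omega)|=\sum_{i=1}^n \mathbf 1_{A_i}(\omega)$ having expectation $\sum P(A_i)>\delta n$, the application of Szemer\'edi's theorem via $N(\delta,\ell)$ to the sets $E(\omega)$, and the finiteness of the collection of $\ell$-term progressions in $\{1,\dots,n\}$. The paper argues directly (a positive-measure set where $|E(\omega)|>\delta n$, then pigeonhole over the finitely many progressions), while you assume all intersections are null and derive a contradiction; the content is the same.
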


\begin{proof}
Set $f_i(x) = 1_{A_i} (x), \; i = 1, 2, \dots, n$, and let $E(x) =
\{ i: f_i (x) = 1\}$.  $|E(x)| = \sum\limits^n_{i=1} f_i (x)$ and
the condition $|E(x)| > \delta n$ is implied by $F(x) = \Sigma f_i
(x) > \delta n$. But $\int F(x)\, dP (x) = \Sigma P(A_i) > \delta
n$ and so for some set $B \subset \Omega$ with $P(B) > 0,\; F(x) >
\delta n$.
Thus for each $x \in B$ we have $|E(x)| > n\delta$ and there is an
$\ell$-term arithmetic progression $R_{a, d}(x)\subset E(x)$, so
that $x$ lies in the intersection of the $A_r$, as $r$ ranges over
the arithmetic progression $R_{a,d}(x)$.
There being only finitely many progressions we obtain for one of
these $P\Big( \bigcap\limits_{r \in R_{a,d}} A_r\Big) > 0$.
\end{proof}

We will need an additional hypothesis to obtain a general multiple recurrence theorem.
\begin{definition} A group $G$  is OU (order unbounded) if for any integer $n$ we have for some $g \in G, \; g^n\neq id$.
\end{definition}

For an OU group we can find, for any given $k$, elements $\sig \in G$
so that none of the powers $\sig, \sig^2,\dots \sig^k$  give the identity.
Note that in our proof of multiple recurrence for SAT actions,
theorem \ref{thm:SAT}, we obtain, for any subset
$A \subset X$ of positive measure, an element $\id \neq \ga \in G$ with:
\begin{equation*} \nu(A\cap \gamma^{-1} A\cap \gamma^{-2} A \cap \dots \cap \gamma^{-k} A)>0,
\end{equation*}
where for an OU group we can demand that each
$\gamma^j \neq id,\; j = 1, 2, \dots, k$. In fact, in that proof we show that
the element $\ga$ can be found within the conjugacy class of any
non-identity element $\sig$ of $G$.

We can now prove

\begin{theorem} Let $(X, \nu)$ represent a stationary action of $(G, \mu)$ with the elements of $G$ acting on $(X,\nu)$ by non-singular transformations and where $G$ is an OU group.  Let $A\subset X$ be a measurable set with $\nu(A) > 0$ and let $k \ge 1 $ be any integer then 
there exists an element $\gamma$ in $G$ with
$\gamma^j \neq id,\ j=1,2,\dots,k$
and with 
$$
\nu(A\cap \gamma^{-1} A\cap \gamma^{-2} A\cap \dots \cap \gamma^{-k} A) > 0.
$$
\end{theorem}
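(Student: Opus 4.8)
\emph{Plan.} The idea is to use Theorem~\ref{thm:structure} to reduce to a \emph{standard} action, i.e.\ a measure preserving extension of an SAT system, and then to lift the SAT recurrence argument through the extension with the help of the Szemer\'edi lemma, Lemma~\ref{lem:sam}. Note that Theorem~\ref{thm:SAT} cannot simply be quoted on the SAT factor: it would only give recurrence for a set living in that factor, not for the given $A\subset X$, so the disintegration of $\nu$ must be carried along throughout. First I would do the reduction. By Theorem~\ref{thm:structure}, $(X,\nu)$ has an extension $(X^{*},\nu^{*})$, via a factor map $\pi$, which is a measure preserving extension of an SAT system $(Y,\rho)$. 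If $A^{*}=\pi^{-1}(A)$ then $\nu^{*}(A^{*})=\nu(A)>0$, and $\bigcap_{j=0}^{k}\gamma^{-j}A^{*}=\pi^{-1}\big(\bigcap_{j=0}^{k}\gamma^{-j}A\big)$ for every $\gamma\in G$, so a multiple recurrence statement for $(X^{*},\nu^{*})$ and $A^{*}$ (with the required non-triviality of $\gamma,\dots,\gamma^{k}$) descends verbatim to $(X,\nu)$ and $A$; the group $G$ is unchanged, hence still OU. So from now on assume $\pi:(X,\nu)\to(Y,\rho)$ is a measure preserving extension with $(Y,\rho)$ SAT, and write $\nu=\int_{Y}\nu_{y}\,d\rho(y)$ for the disintegration, so that $g\nu_{y}=\nu_{gy}$ for all $g\in G$ and a.e.\ $y$.

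Next, set $\varphi(y)=\nu_{y}(A)$, so $\int_{Y}\varphi\,d\rho=\nu(A)>0$; fix $\delta=\nu(A)/2$, and let $B=\{y\in Y:\varphi(y)>\delta\}$, which has $\rho(B)>0$. Let $n=N(\delta/2,k+1)$ be the Szemer\'edi bound, and use OU to pick $\sigma\in G$ with $\sigma,\sigma^{2},\dots,\sigma^{n-1}$ all $\neq\id$. By Lemma~\ref{lem:ac}, applied to the finitely many non-singular maps $\sigma^{-i}$ ($0\le i\le n-1$), choose $\delta'>0$ so that $\rho(C)>1-\delta'$ forces $\rho(\sigma^{-i}C)>2/3$ for all such $i$. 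Using the SAT property of $(Y,\rho)$ for the set $B$, pick $g\in G$ with $\rho(g^{-1}B)>1-\delta'$, and put $C=g^{-1}B$ and $A_{0}=g^{-1}A$. Applying $g\nu_{y}=\nu_{gy}$ twice one gets $\nu_{y}(\sigma^{-i}A_{0})=\varphi(g\sigma^{i}y)$, whence for $0\le i\le n-1$
\[
\nu(\sigma^{-i}A_{0})=\int_{Y}\varphi(g\sigma^{i}y)\,d\rho(y)\ \ge\ \delta\,\rho(\sigma^{-i}C)\ >\ \delta/2 .
\]
Now apply Lemma~\ref{lem:sam} in the probability space $(X,\nu)$ to the $n$ sets $A_{i}=\sigma^{-i}A_{0}$: there are $a\ge 0$ and $d\ge 1$ with $a+kd\le n-1$ such that $\nu\big(\bigcap_{j=0}^{k}\sigma^{-(a+jd)}A_{0}\big)>0$, and since $\sigma^{-a}$ is non-singular and bijective this yields $\nu\big(\bigcap_{j=0}^{k}\sigma^{-jd}A_{0}\big)>0$. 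Set $\gamma_{0}=\sigma^{d}$: for $1\le j\le k$ we have $\gamma_{0}^{j}=\sigma^{jd}$ with $1\le jd\le n-1$, so $\gamma_{0}^{j}\neq\id$, and $\nu\big(A_{0}\cap\gamma_{0}^{-1}A_{0}\cap\dots\cap\gamma_{0}^{-k}A_{0}\big)>0$. Finally, since $A_{0}=g^{-1}A$, applying $g$ and setting $\gamma=g\gamma_{0}g^{-1}$ (so that $g\gamma_{0}^{-j}g^{-1}=\gamma^{-j}$ and $\gamma^{j}=g\gamma_{0}^{j}g^{-1}\neq\id$) gives $\nu\big(A\cap\gamma^{-1}A\cap\dots\cap\gamma^{-k}A\big)>0$, which is the assertion.

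The step I expect to be the crux is the displayed lower bound $\nu(\sigma^{-i}A_{0})>\delta/2$ holding for \emph{all} $i=0,\dots,n-1$ simultaneously: because $\rho$, and hence $\nu$, is only stationary and not $G$-invariant, the translates $\sigma^{-i}A_{0}$ need not have measure comparable to that of $A_{0}$, and it is precisely the SAT property of the factor (used to move almost all the mass of $B$ into $g^{-1}B$) together with absolute continuity (Lemma~\ref{lem:ac}) that keeps all $n$ of these sets above the Szemer\'edi threshold $\delta/2$ at once. Beyond that, the only thing requiring care is to fix the constants in the order $\delta\to n\to\sigma\to\delta'\to g$, so that no choice depends on a later one; the remaining manipulations are routine uses of the cocycle identity $g\nu_{y}=\nu_{gy}$ and of non-singularity, and the passage from an extension to a factor is as recorded in the text preceding the statement.
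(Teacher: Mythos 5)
Your proof is correct, and it rests on the same three pillars as the paper's: the reduction via Theorem~\ref{thm:structure} to a measure preserving extension $\pi:(X,\nu)\to(Y,\rho)$ of an SAT system, the set $B=\{y:\nu_y(A)>\delta\}$ of fibers of positive density, and the Szemer\'edi lemma, Lemma~\ref{lem:sam}. Where you diverge is in the execution of the final step. The paper invokes Theorem~\ref{thm:SAT} as a black box on $B$ in the factor, producing a single $\gamma$ with $\rho(B\cap\gamma^{-1}B\cap\dots\cap\gamma^{-N}B)>0$, and then applies Lemma~\ref{lem:sam} \emph{fiberwise}, in the probability space $(X,\nu_y)$ for a point $y$ of that intersection, to the sets $\gamma^{-j}A$, whose $\nu_y$-measures all exceed $\delta$. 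You instead inline the SAT mechanism (choose $\sigma$ by the OU property, move $B$ to near-full measure by $g^{-1}$, control its translates by Lemma~\ref{lem:ac}), integrate the fiber identity $\nu_y(\sigma^{-i}g^{-1}A)=\nu_{g\sigma^i y}(A)$ over $Y$ to obtain the \emph{global} bounds $\nu(\sigma^{-i}g^{-1}A)>\delta/2$ for $i=0,\dots,n-1$, and apply Lemma~\ref{lem:sam} once in $(X,\nu)$ before conjugating back by $g$. Both routes are valid. Yours has the minor advantages of making the order of choices ($\delta\to n\to\sigma\to\delta'\to g$) fully explicit and of using $N(\delta/2,k+1)$ rather than $N(\delta,k)$: a $(k+1)$-term progression is what one actually needs to produce the $k+1$ sets $A,\gamma^{-1}A,\dots,\gamma^{-k}A$ and to keep every exponent $jd$, $1\le j\le k$, strictly below $n$ so that $\gamma_0^{\,j}=\sigma^{jd}\neq\id$. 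The paper's fiberwise route, on the other hand, yields slightly more information, namely that the multiple recurrence already occurs within a single conditional measure $\nu_y$.
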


\begin{proof} 
We can assume $(X,\nu)$ is a measure preserving extension of
$(Y, \rho)$ where the action of $G$ on $(Y, \rho)$ is SAT.  Let $\pi: X\to Y$ and decompose $\nu = \int \nu_y d\rho(y)$.  Let $A\subset X$ be given and let $\delta > 0$ be such that $B = \{ y:\nu_y(A) > \delta\}$ has positive measure.
Set $N=N (\delta, k)$ in theorem \ref{thm:SAT} and find $\gamma $ with $\gamma^j \neq id$\, for $j = 1, 2, \dots, N$, and with
\begin{equation*} \rho(B\cap \gamma^{-1} B\cap \gamma^{-2} B \cap \dots \cap \gamma^{-N}B) > 0.\end{equation*}
For $y\in B \cap \gamma^{-1} B\cap \dots 
\cap \gamma^{-N} B$ and
$j = 1, 2, \dots, N$ we will have
$$
\nu_y (\gamma^{-j} A) = \gamma^j\nu_y (A) = \nu_{\gamma^jy} (A) > \delta.
$$
We now use lemma \ref{lem:sam} to obtain for each $y\in B\cap
\gamma^{-1} B\cap \dots \cap \gamma^{N}B$ a $k$-term arithmetic
progression $R$ of powers of $\gamma$ with $\nu_y
\big(\bigcap\limits_{j\in R} \gamma^{-j}A\big) > 0$. In particular
$\bigcap\limits_{j \in R} \gamma^{-j} A\neq \emptyset$ for some
arithmetic progression $R = \{ a, a+d, a+2d,\dots, a+(k-1)d \}$ so
that with $\gamma' = \gamma^{d}$,
$$
A \cap \gamma'^{-1}  A \cap \gamma'^{-2} A\cap \dots
\cap \gamma'^{-k} \neq \emptyset.
$$
Obtaining a non-empty intersection suffices to obtain an intersection of positive measure, and so our theorem is proved.
\end{proof}


\bibliographystyle{amsplain}

\end{document}